\newtheorem{theorem}{Theorem}[section]
\newtheorem{lemma}{Lemma}[section]
\theoremstyle{remark}\newtheorem{example}{Example}[section]
\theoremstyle{remark}
\begin{document}
\title[Geometric properties of some generalized Mathieu power series]
      {Geometric properties of some generalized Mathieu power series inside the unit disk}

\author[S. Gerhold, {\v Z}. Tomovski, D. Bansal, A. Soni]{Stefan Gerhold, {\v Z}ivorad Tomovski, Deepak Bansal and Amit Soni}
\thanks{{\v Z}ivorad Tomovski was supported by  DAAD,
during his visit to the Department of Physics at the University of Potsdam in Germany
from 15 June 2021 to 15 September 2021 to collaborate with Ralf Metzler.}

\address{TU Wien, Vienna, Austria}
\email{sgerhold@fam.tuwien.ac.at}

\address{University of Ostrava, Faculty of Sciences, Department of Mathematics, 
30. Dubna 22701 03 Ostrava, Czech Republic}
\email{zhivorad.tomovski@osu.cz}

\address{Department of Mathematics, University College of Engineering and Technology, Bikaner 334004, Rajasthan, India }
\email{deepakbansal\_79@yahoo.com}

\address{Department of Mathematics, Govt. Engineering College, Bikaner,  334004, Rajasthan, India }
\email{aamitt1981@gmail.com}

\date{}
\begin{abstract}
We consider two parametric families of special functions: One is defined by a power series generalizing
the classical Mathieu series, and the other one is a generalized Mathieu type power series
involving factorials in its coefficients. Using criteria due to Fej\'er and Ozaki, we provide
sufficient conditions for these functions to be close-to-convex or starlike inside the unit disk,
and thus univalent.
\end{abstract}

\subjclass[2010]{33E20, 40A10, 30C45} \keywords{Univalent function, Starlike function, Close-to-convex function, Generalized Mathieu-type series.}
\maketitle

\section{Introduction and Preliminaries}

Let $\mathbb{U}=\{z\in\mathbb{C}:|z|<1\}$ denote the open unit disk and $\mathscr{A}$ denote the class of all analytic functions inside the unit disk $\mathbb{U},$ normalized by the conditions $f(0)=0,$
$f'(0)=1$.
We denote by $\mathscr{S}$ the class of all functions $f \in \mathscr{A}$ which are univalent in $\mathbb{U},$ i.e.
\begin{equation*}
   \mathscr{S}=\{f \in \mathscr{A} |\; f \;\mbox{is one-to-one
   in}\;\mathbb{U} \}.
\end{equation*}
A set $\Omega\subseteq \mathbb{C}$ containing the origin is called starlike with respect to the origin if for any point $z\in\Omega$ the line segment from the origin to $z$ lies in the interior of $\Omega$. A function $f \in \mathscr{A}$ that maps the unit disk $\mathbb{U}$ onto a starlike domain is called starlike function, and the class of such functions is denoted by $\mathscr{S}^*$. Analytically, starlike functions are characterized by the condition
\begin{equation*}
    \Re \left(\frac{zf'(z)}{f(z)} \right)>0,  \quad z
    \in \mathbb{U}.
\end{equation*}
An analytic function $f\in \mathscr{A}$ is called close-to-convex if $\Re 
\left\{e^{i\theta}zf'(z)/g(z) \right\}>0, \,z \in \mathbb{U},$ for some $\theta\in\mathbb{R}$ and for some starlike function $g\in\mathscr{S}^*$. Taking $g(z)=z$, it is easy to see that every starlike function is close-to-convex. However, the converse is not true. The Noshiro-Warschawski theorem implies that close-to-convex functions
are univalent in $\mathbb{U}$, but the converse is not true in general. Thus, it is convenient to show that $f$ is close-to-convex in order to check the univalency of $f$. Geometrically an analytic function
$f$ is called close-to-convex in $\mathbb{U}$, if the complement
of $f(\mathbb{U})$ can be written as the union of non-intersecting half-lines. These classes are studied in detail in the literature (see the books of Duren \cite{duren} and Goodman \cite{goodman}).
Among many papers dealing with these geometric properties for certain special functions, we
mention~\cite{sangal} and the references therein.

The following infinite series was named after \'{E}mile Leonard Mathieu (1835-1890), who investigated it in his 1890 monograph \cite{mathieu} on elasticity of solid bodies:
\begin{equation}\label{mathieu}
S(r)=\sum\limits_{n=1}^{\infty}{\frac{2n}{(n^2+r^2)^2}},\quad r\in \mathbb{R}_+.
\end{equation}
An integral representation of the series $S(r)$ is given by (see \cite{emersleben})
\begin{equation*}
S(r)=\frac{1}{r}\int_{0}^{\infty}{\frac{t\sin(rt)}{e^t-1}dt}.
\end{equation*}
The generalized Mathieu type power series or generalized  Mathieu type power series of fractional order $\mu$ is defined by
 (see \cite{tomovski}, \cite{tomovski1}):
\begin{equation*}
F_\mu(r;z)=\sum\limits_{n=1}^{\infty}{\frac{2n}{(n^2+r^2)^{\mu+1}}}z^n,
\quad \mu, r\in \mathbb{R}_+ ,\;|z|< 1.
\end{equation*}
In 1998, Alzer et al.~\cite{alzer} obtained the following bounds for Mathieu's series \eqref{mathieu}:
\begin{equation*}
\frac{1}{r^2+\frac{1}{2\zeta(3)}}<S(r) < \frac{1}{r^2+1/6},
\end{equation*}
where $\zeta$ denotes the zeta function. 
One can refer to \cite{alzer, cerone, choi, diananda,elezovic, makai, pogany, sriva, ourbook} about the study of Mathieu's series, its generalizations and inequalities.
In the present investigation, our aim is to study geometric properties of generalized Mathieu type power series.
It is obvious that  $z\mapsto{F}_{\mu}(r;z)\notin \mathscr{A}$, so we use the following normalization:
\begin{align}
\mathbb{F}_{\mu}(r;z)&=\frac{(r^2+1)^{\mu+1}}{2}\sum\limits_{n=1}^{\infty}{\frac{2n}{(n^2+r^2)^{\mu+1}}}z^n \notag\\
&=z+\sum\limits_{n=2}^{\infty}{\frac{n(r^2+1)^{\mu+1}}{(n^2+r^2)^{\mu+1}}}z^n. \label{MT}
\end{align}
For $\mu=1$, $\mathbb{F}_{\mu}(r;z)$ becomes a Mathieu type power series (see \cite{tomovski}). Geometric properties of  the series $\mathbb{F}_{1}(r;z)$ have already been discussed in \cite{bansaljmi}.  Recently, Gerhold et al.~\cite{gerhold} considered the following generalized Mathieu type power series:
\begin{equation*}
{Q}_{\mu}(r;z)=\sum\limits_{n=1}^{\infty}{\frac{2\;n!}{((n!)^2+r^2)^{\mu+1}}}z^n,
\quad \mu, r\in \mathbb{R}^+, |z|<1.
\end{equation*}
Since ${Q}_{\mu}(r;z)\notin \mathcal{A}$, we define the normalization
\begin{align}
{\mathbb{Q}}_{\mu}(r;z)&=\frac{(r^2+1)^{\mu+1}}{2}\sum\limits_{n=1}^{\infty}{\frac{2\;n!}{((n!)^2+r^2)^{\mu+1}}}z^n \notag \\
&= z+\sum\limits_{n=2}^{\infty}{\frac{n!(r^2+1)^{\mu+1}}{((n!)^2+r^2)^{\mu+1}}}z^n. \label{eq:def Q}
\end{align}
In the present investigation our aim is to find geometric properties of $\mathbb{F}_{\mu}(r;z)$ and ${\mathbb{Q}}_{\mu}(r;z)$.
A sequence of real numbers $\{a_{n}\}_{n\geq1}$ satisfying the condition 
\begin{equation*}
2a_{n+1}\leq a_{n}+a_{n+2},\quad n\geq1,
\end{equation*}
is called convex sequence. 
It is clear that if $f(x)$ is a convex function (of a real variable) for $x\geq1$, then the sequence $a_{n}=f(n), n=1,2...$ is convex.
We need the following lemmas to prove our main results.
\begin{lemma}\label{ozaki}{\rm(Ozaki \cite{ozaki}, Corollary~7)}.
Let $f(z)=z+\sum\limits_{n=2}^{\infty}{a_nz^n}$. Suppose
\begin{equation*}
   1\ge 2a_2\ge \cdots \ge (n+1)a_{n+1}\ge \cdots \ge 0
\end{equation*}
or
\begin{equation*}
   1\le 2a_2\le \cdots\le (n+1)a_{n+1}\le \cdots \le 2.
\end{equation*}
then $f$ is 
close-to-convex with respect to the starlike function $z/(1-z).$
\end{lemma}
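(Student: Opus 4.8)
The plan is to reduce the assertion to a single positivity inequality and then to exploit the monotonicity of the coefficients in an elementary way. Since the definition of close-to-convexity lets us choose both the starlike comparison function and the angle $\theta$, I would take $g(z)=z/(1-z)$ (already declared starlike in the statement) and $\theta=0$. Because $z/g(z)=1-z$, the requirement $\Re\{e^{i\theta}zf'(z)/g(z)\}>0$ becomes exactly
\[
\Re\big((1-z)f'(z)\big)>0,\qquad z\in\mathbb{U}.
\]
Thus the whole lemma collapses to proving this one inequality under each of the two hypotheses.

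Next I would set $b_n=na_n$, so that $b_1=1$ and the two hypotheses read, respectively, $1=b_1\ge b_2\ge\cdots\ge 0$ and $1=b_1\le b_2\le\cdots\le 2$. In either case $0\le b_n\le 2$, hence $|a_n|\le 2/n$, so $f$ is analytic on $\mathbb{U}$ and the rearrangements below are justified by absolute convergence on compact subsets. Writing $f'(z)=\sum_{k\ge 0}b_{k+1}z^{k}$ and multiplying by $1-z$, the series telescopes to
\[
(1-z)f'(z)=1+\sum_{k=1}^{\infty}(b_{k+1}-b_k)\,z^{k}.
\]

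The core of the argument is a bound on the total variation of $\{b_n\}$. Since the sequence is monotone, the differences $b_{k+1}-b_k$ all have one sign, so $\sum_{k\ge 1}|b_{k+1}-b_k|$ telescopes to $|\lim_n b_n-1|$. Here the numerical bounds $0$ and $2$ play their decisive role: they confine $\lim_n b_n$ to $[0,2]$ and hence force $\sum_{k\ge1}|b_{k+1}-b_k|=|\lim_n b_n-1|\le 1$. Consequently, for $z\in\mathbb{U}$,
\[
\Re\big((1-z)f'(z)\big)\ge 1-\sum_{k=1}^{\infty}|b_{k+1}-b_k|\,|z|^{k}>1-\sum_{k=1}^{\infty}|b_{k+1}-b_k|\ge 0,
\]
which is the desired inequality.

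The one step needing genuine care is the strict middle inequality, which is what prevents the real part from touching $0$ in the extremal cases $\lim_n b_n=0$ or $\lim_n b_n=2$. It holds because $|z|^{k}<1$ for $z\in\mathbb{U}$, so each nonzero term is strictly shrunk; the only exception is when every difference vanishes, i.e.\ $b_n\equiv 1$, in which case $(1-z)f'(z)\equiv 1$ and positivity is immediate. I expect this sharpness bookkeeping --- together with the observation that the endpoints $0$ and $2$ are dictated precisely by the total-variation bound $\le 1$ --- to be the main conceptual point, the remaining manipulations being routine.
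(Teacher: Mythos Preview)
The paper does not actually prove this lemma; it is quoted from Ozaki~\cite{ozaki} as a known result and used as a black box. Your argument, by contrast, supplies a complete and correct proof: the reduction to $\Re\big((1-z)f'(z)\big)>0$ is the right reformulation of close-to-convexity with respect to $z/(1-z)$, the telescoping identity $(1-z)f'(z)=1+\sum_{k\ge1}(b_{k+1}-b_k)z^k$ is computed correctly, and the total-variation bound $\sum_{k\ge1}|b_{k+1}-b_k|\le 1$ follows exactly as you say from monotonicity together with the endpoint constraints $0$ and $2$. Your handling of the strict inequality (separating the constant case $b_n\equiv 1$) is also sound. So there is nothing to compare against in the paper itself, but your proof stands on its own and is the standard elementary route to this result.
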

\begin{lemma}{\rm(Fej\'{e}r \cite{fejer}, Satz~IX)}\label{fejer}
If $\{a_n\}$ is a non-negative real sequence with $a_1=1$ and such that
 $\{na_n\}_{n\geq1}$
and $\{na_n-(n+1)a_{n+1}\}_{n\geq1}$ are non-increasing, then the
function $f(z)=z+\sum\limits_{n=2}^{\infty}{a_nz^n}$ is in $\mathscr{S}^*$.
\end{lemma}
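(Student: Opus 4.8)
The plan is to establish the analytic criterion $\Re\bigl(zf'(z)/f(z)\bigr)>0$ on $\mathbb{U}$ directly from the coefficient hypotheses. The first step is to repackage the two monotonicity assumptions into one structural statement. Setting $b_n=na_n$, the hypothesis that $\{na_n\}$ is non-increasing reads $b_n\ge b_{n+1}$, while the hypothesis that $\{na_n-(n+1)a_{n+1}\}=\{b_n-b_{n+1}\}$ is non-increasing reads $b_n-2b_{n+1}+b_{n+2}\ge 0$; combined with $b_1=1$ and $a_n\ge 0$, this says exactly that $\{b_n\}_{n\ge1}$ is a non-negative, non-increasing, convex sequence. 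This is the precise form needed to feed Fej\'er's theory of non-negative trigonometric kernels.

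Next I would reduce starlikeness to a single positivity statement. Since for $z$ with $f(z)\neq0$ one has $\Re\bigl(zf'(z)/f(z)\bigr)=\Re\bigl(zf'(z)\overline{f(z)}\bigr)/|f(z)|^2$, it suffices to prove that $\Re\bigl(zf'(z)\overline{f(z)}\bigr)>0$ for $0<|z|<1$; note this inequality also yields $f(z)\neq0$ there for free, so the zero-freeness of $f$ needed to divide comes bundled with the estimate. Writing $z=re^{i\theta}$ expands $\Re\bigl(zf'(z)\overline{f(z)}\bigr)$ into a cosine series in $\theta$ whose coefficients are explicit bilinear expressions in the $a_n$. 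The engine of the proof is then the Abel (summation-by-parts) transformation applied twice: the first application exploits the non-increasing property $b_n-b_{n+1}\ge0$, and the second exploits the convexity $b_n-2b_{n+1}+b_{n+2}\ge0$, the aim being to organize the relevant sums as non-negative combinations of Fej\'er kernels $K_N(\theta)=\sum_{|k|\le N}\bigl(1-\tfrac{|k|}{N+1}\bigr)e^{ik\theta}\ge0$. Non-negativity of the kernels then forces $\Re\bigl(zf'(z)\overline{f(z)}\bigr)\ge0$, and strictness on $0<|z|<1$ follows because the diagonal term $\sum_{n\ge1}na_n^2r^{2n}$ is strictly positive.

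The step I expect to be the main obstacle is precisely the nonlinearity caused by carrying $f$ in a denominator. For the close-to-convex criterion of Lemma~\ref{ozaki} it is enough to control the real part of the single \emph{linear} expression $(1-z)f'(z)$, so mere monotonicity $1\ge 2a_2\ge\cdots$ suffices; starlikeness, by contrast, is not preserved under convex combinations, so one cannot simply decompose $f$ into elementary starlike pieces and must instead handle the quotient $zf'/f$ as a whole. This is exactly why Fej\'er's theorem demands the second, convexity hypothesis on $\{na_n\}$ in addition to monotonicity: the second-order information $b_n-2b_{n+1}+b_{n+2}\ge0$ is what should make the twice-iterated Abel transform terminate in \emph{non-negative} Fej\'er-kernel coefficients rather than merely bounded ones. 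I would therefore expect the careful bookkeeping of this double summation by parts, together with the verification that the leftover boundary terms do not spoil positivity, to be the technical heart of the argument; the edge case $b_n\to L>0$ (for instance $a_n=1/n$, giving $f(z)=-\log(1-z)$) should be accommodated by splitting off the corresponding Koebe-type contribution before applying the kernel estimate.
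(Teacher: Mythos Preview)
The paper does not supply a proof of this lemma; it is quoted with attribution to Satz~IX of Fej\'er~\cite{fejer} and then used as a tool in Theorems~\ref{th3} and~\ref{thm:Q star}. There is therefore no in-paper argument to compare your proposal against.

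On the proposal itself: what you have written is a strategy rather than a proof, and the obstacle you single out is genuine and not resolved by the sketch. With $z=re^{i\theta}$ the quantity $\Re\bigl(zf'(z)\overline{f(z)}\bigr)$ expands as the double sum $\sum_{m,n\ge1} m\,a_m a_n\,r^{m+n}\cos(m-n)\theta$, which is \emph{bilinear} in the coefficient sequence. The Fej\'er-kernel mechanism you invoke---two rounds of Abel summation turning a convex non-increasing sequence into a non-negative combination of kernels---applies to a single cosine series $\sum_k c_k\cos k\theta$; it does not, without further work, control a bilinear form of this type. Your plan asserts that the double Abel transform on $b_n=na_n$ ``should'' produce non-negative Fej\'er-kernel coefficients, but you do not indicate how the second factor $\overline{f(z)}=\sum_n a_n\bar z^{\,n}$ is absorbed into that computation. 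As written, then, there is a gap precisely at the step you flagged as the technical heart; filling it would require either an explicit manipulation showing how the bilinear expression collapses to a single-series kernel estimate, or a different route such as Fej\'er's own representation of $zf'(z)$ as a superposition of explicit extremal functions whose starlikeness is checked directly.
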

\begin{lemma}\label{f1}
 Let $\{a_n\}_{n\geq1}$ be a non-increasing sequence of non-negative real numbers with $a_1=1$ which is convex, i.e.
 \begin{equation*}
     a_1-a_2\geq\cdots \geq a_k-a_{k+1}\geq\cdots\geq0.
\end{equation*}
Then
\begin{equation*}
   \Re \bigg( \sum\limits_{n = 1}^\infty  a_nz^{n - 1}\bigg) > \frac12,
   \quad z\in \mathbb{U}.
\end{equation*}
\end{lemma}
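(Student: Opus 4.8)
The plan is to reduce the statement to the positivity of the Fej\'er kernel together with a double summation by parts. First I would re-index by setting $b_m = a_{m+1}$ for $m \ge 0$, so that $b_0 = a_1 = 1$, the sequence $\{b_m\}$ is non-negative, non-increasing and convex (its second differences $\Delta^2 b_m = b_m - 2b_{m+1} + b_{m+2}$ are exactly the quantities $(a_{m+1}-a_{m+2}) - (a_{m+2}-a_{m+3}) \ge 0$), and the claim becomes $\Re g(z) > \tfrac12$ on $\mathbb U$ for $g(z) = \sum_{m\ge0} b_m z^m$. Since $0 \le a_n \le 1$, this $g$ is automatically analytic on $\mathbb U$; and since $\{b_m\}$ is non-increasing and bounded below by $0$, it converges to a limit $L \in [0,1]$ which will reappear as a boundary term below.

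The key ingredient I would isolate is a \emph{real-part bound for the Fej\'er kernel}. Put $F_m(z) = \sum_{k=0}^m \big(1 - \tfrac{k}{m+1}\big) z^k$. On the unit circle the classical non-negativity of the two-sided Fej\'er kernel, $1 + 2\sum_{k=1}^m \big(1-\tfrac{k}{m+1}\big)\cos k\theta = \tfrac{1}{m+1}\big|\sum_{k=0}^m e^{ik\theta}\big|^2 \ge 0$, rearranges to $\Re F_m(e^{i\theta}) = 1 + \sum_{k=1}^m \big(1-\tfrac{k}{m+1}\big)\cos k\theta \ge \tfrac12$. Because $F_m$ is a polynomial, $\Re F_m$ is harmonic on $\overline{\mathbb U}$, so by the minimum principle its infimum over the closed disk is attained on the boundary; as $\Re F_m(0) = 1 \neq \tfrac12$ the function is non-constant, whence $\Re F_m(z) > \tfrac12$ throughout $\mathbb U$. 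I would also record the limiting case $\Re\tfrac{1}{1-z} > \tfrac12$ on $\mathbb U$, which holds since $z \mapsto 1/(1-z)$ maps $\mathbb U$ onto the half-plane $\{\Re w > \tfrac12\}$.

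Next I would connect $g$ to these kernels by applying summation by parts twice. Writing $S_n(z)=\sum_{k=0}^n z^k$ and $\sigma_n(z)=\sum_{k=0}^n S_k(z) = (n+1)F_n(z)$, two Abel transforms yield the representation
\[
  g(z) = \sum_{m\ge0} (m+1)\,\Delta^2 b_m\, F_m(z) + \frac{L}{1-z}.
\]
The point is twofold: the weights $(m+1)\Delta^2 b_m$ are non-negative by convexity, and a telescoping identity gives $\sum_{m\ge0}(m+1)\Delta^2 b_m = b_0 - L = 1 - L$, so the total mass of the weights together with the residual coefficient $L$ of $1/(1-z)$ equals $1$. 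I would confirm the representation by checking that the coefficient of $z^j$ on the right, namely $\sum_{m\ge j}(m+1-j)\,\Delta^2 b_m + L$, telescopes back to $b_j$; absolute convergence of $\sum_{m}(m+1)\Delta^2 b_m\, F_m(z)$ on compact subsets of $\mathbb U$ (the weights are summable and $|F_m(z)| \le (1-|z|)^{-1}$) legitimizes treating it as an identity of analytic functions. Taking real parts and using the two bounds of the previous paragraph then gives $\Re g(z) > \tfrac12(1-L) + \tfrac12 L = \tfrac12$.

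The main obstacle I anticipate is the careful bookkeeping of the boundary terms in the double Abel transform: one must show that the non-summed remainders vanish in the limit — concretely that $b_m\to L$ and $m(b_m - b_{m+1}) \to 0$ (the latter because $\{b_m - b_{m+1}\}$ is non-increasing and summable) — and correctly identify the left-over constant tail as the $L/(1-z)$ term. Once this is settled, the convexity hypothesis enters precisely through $\Delta^2 b_m \ge 0$, and the argument reduces to exhibiting $g$ as an average of functions each of whose real part exceeds $\tfrac12$.
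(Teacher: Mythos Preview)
Your argument is correct and rests on the same two ingredients as the paper's proof: a double Abel summation and the non-negativity of the Fej\'er kernel. The packaging, however, differs in a way worth noting. The paper fixes $z=re^{i\theta}$, absorbs the radius into the coefficients by setting $\tilde a_n=r^{n-1}a_n$ (so that the new sequence is \emph{strictly} convex for $r<1$), and then works purely with the real trigonometric sums $s_n=\tfrac12+\sum_{k=1}^n\cos k\theta$ and their Ces\`aro means $\sigma_n\ge0$; strictness of the final inequality comes from the strict convexity of $\{\tilde a_n\}$. You instead keep the complex variable, write $g$ as a non-negative affine combination (with total weight $1$) of the polynomials $F_m$ and the M\"obius map $1/(1-z)$, and obtain strictness from the minimum principle for the harmonic functions $\Re F_m$. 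Your version makes the ``convex combination of half-plane maps'' structure explicit and handles the tail $L/(1-z)$ cleanly when $a_n$ does not tend to~$0$; the paper's version avoids any appeal to harmonic-function theory. The delicate point you correctly flag---that the boundary term $\Delta b_{N-1}\,\sigma_{N-1}(z)$ vanishes because $N(b_{N-1}-b_N)\to0$ for a non-increasing summable sequence of first differences---is exactly what the paper handles by observing $\tilde a_n=o(1)$ after absorbing the radius.
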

\begin{proof}
  According to Lemma~3.4 in~\cite{bala}, this is due to Fej{\'e}r~\cite{fejer}. As we could
  not find the result in this reference, we give a proof for the reader's convienence, which
  is a simple variation of a proof found in another paper by Fej{\'e}r (\cite{fejer1925}, Theorem~1),
  and without claiming originality. 
  Let $z=r e^{i\theta}$ with $r\in(0,1)$ and $\theta\in(0,2\pi).$ (The cases $r=0$ and
  $\theta=0$ are  both trivial.) Define $\tilde{a}_n=r^{n-1} a_n.$ As the radius of convergence
  of $\sum a_nz^n$ is at least~$1$, we have $\tilde{a}_n=o(1).$
  The sequence $\tilde{a}_n$ inherits decrease and convexity from $a_n$ (see p.~98
  in~\cite{fejer}), and it is easy to see that it is actually \emph{strictly} convex. We define
  $s_n=1/2+\sum_{k=1}^n \cos k \theta$ and
  \[
    \sigma_n = \sum_{k=0}^n s_k = \frac12\Big( \frac{\sin\big((n+1)\theta/2\big)}{\sin(\theta/2)} \Big)^2,
  \]
  which satisfies $0\leq \sigma_n=O(1).$ Using summation by parts twice, we find, for $N\geq3,$
  \begin{align*}
   \Re  \bigg(\sum_{n = 1}^{N+1}  & a_nz^{n - 1}\bigg)
   =1 + \sum_{n=1}^N \tilde{a}_{n+1} \cos n\theta
   = 1 + \sum_{n=1}^N \tilde{a}_{n+1} (s_n-s_{n-1})\\
   &=1 + \tilde{a}_{N+1}(\sigma_N-\sigma_{N-1}) -\tilde{a}_2 s_0 -\Big(
     (\tilde{a}_{N+1}-\tilde{a}_N)\sigma_{N-1}-(\tilde{a}_3-\tilde{a}_2)\sigma_0 \\
     &\qquad-\sum_{n=3}^N\sigma_{n-2}(\tilde{a}_{n+1}-2\tilde{a}_n+\tilde{a}_{n-1})\Big).
  \end{align*}
  Then, $N\to\infty$ yields
  \begin{align*}
    \Re  \bigg(\sum_{n = 1}^{\infty}   a_nz^{n - 1}\bigg) 
    &= 1 - \tfrac12  \tilde{a}_2 +\tfrac12(\tilde{a}_3-\tilde{a}_2)
    +\sum_{n=3}^\infty \sigma_{n-2}(\tilde{a}_{n+1}-2\tilde{a}_n+\tilde{a}_{n-1}) \\
    &> 1-\tilde{a}_2 + \tfrac12 \tilde{a}_3 
    = \frac12 +\frac{\tilde{a}_1-2\tilde{a}_2+\tilde{a}_3}{2} \geq \frac12. \qedhere
  \end{align*}
\end{proof}

\section{Close-to-convexity and starlikeness of $\mathbb{F}_{\mu}(r;z)$}

\begin{theorem}\label{t2}
If $\mu>0 \;\mbox{and}\; 0<r\le \sqrt{\mu},$ th\textit{}en $\mathbb{F}_{\mu}(r;z)$ is
close-to-convex with respect to the starlike function $z/(1-z).$
\end{theorem}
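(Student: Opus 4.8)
The plan is to verify the hypotheses of Ozaki's criterion (Lemma~\ref{ozaki}) for the normalized series $\mathbb{F}_\mu(r;z)=z+\sum_{n=2}^\infty a_n z^n$, whose coefficients are, from~\eqref{MT},
\[
  a_n=\frac{n(r^2+1)^{\mu+1}}{(n^2+r^2)^{\mu+1}},\qquad n\ge 1,
\]
with $a_1=1$. Because the assumption $0<r\le\sqrt\mu$ is a smallness condition on $r$, I expect the \emph{decreasing} branch of Lemma~\ref{ozaki} to be the relevant one. Thus the goal reduces to showing that the sequence $c_n:=n a_n$ satisfies $1=c_1\ge c_2\ge\cdots\ge c_n\ge\cdots\ge 0$.

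Writing out $c_n$ gives
\[
  c_n=n a_n=(r^2+1)^{\mu+1}\,\frac{n^2}{(n^2+r^2)^{\mu+1}},
\]
so, the positive constant factor $(r^2+1)^{\mu+1}$ being irrelevant for monotonicity, the behaviour of $\{c_n\}$ is controlled by the real function $g(x)=x^2(x^2+r^2)^{-(\mu+1)}$ on $[1,\infty)$. The natural step is to differentiate. Logarithmic differentiation yields
\[
  \frac{g'(x)}{g(x)}=\frac{2}{x}-\frac{2(\mu+1)x}{x^2+r^2}
  =\frac{2\,(r^2-\mu x^2)}{x\,(x^2+r^2)}.
\]
Since $g(x)>0$ and the denominator is positive for $x\ge 1$, the sign of $g'(x)$ coincides with that of $r^2-\mu x^2$. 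This is exactly where the hypothesis enters: $r\le\sqrt\mu$ means $r^2/\mu\le 1$, so for every $x\ge 1$ we have $\mu x^2\ge\mu\ge r^2$, whence $g'(x)\le 0$. Hence $g$ is non-increasing on $[1,\infty)$, and therefore $\{c_n\}$ is a non-increasing sequence of non-negative numbers with $c_1=1$.

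This is precisely the first chain of inequalities in Lemma~\ref{ozaki}, and applying that lemma gives close-to-convexity of $\mathbb{F}_\mu(r;z)$ with respect to $z/(1-z)$, as claimed. I do not anticipate a genuine obstacle: the argument is a clean reduction to a one-variable monotonicity question, and the only point requiring care is confirming that the sign threshold $x=r/\sqrt\mu$ for $g'$ lies at or below $x=1$, which is exactly what $r\le\sqrt\mu$ guarantees. A mild subtlety worth noting is that Ozaki's criterion requires only $c_n\ge 0$ rather than a strict positive lower bound, so no separate limiting argument for the tail of the sequence is needed.
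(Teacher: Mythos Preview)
Your argument is correct and follows essentially the same approach as the paper's proof: both reduce the claim to Ozaki's criterion (Lemma~\ref{ozaki}) and verify that $\{na_n\}$ is non-increasing by differentiating the real function $x\mapsto x^2/(x^2+r^2)^{\mu+1}$ and checking that the sign of the derivative is governed by $r^2-\mu x^2\le 0$ for $x\ge 1$ when $r\le\sqrt\mu$. The only cosmetic difference is that you use logarithmic differentiation where the paper differentiates directly; the paper also records a second proof via Bernoulli's inequality, but your route matches the primary one.
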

\begin{proof}
In view of Lemma~\ref{ozaki} it is sufficient to prove that the sequence $\{na_{n}\}$ is non-increasing. Here 
the $a_n$ are the coefficients in the series expansion of $\mathbb{F}_{\mu}(r;z)$ given by \eqref{MT}. Let
\[f(n)=na_{n}= \frac{n^{2}(r^{2}+1)^{\mu+1}}{(n^{2}+r^{2})^{\mu+1}}.\]
Now it is sufficient to show that the function  $f(x)= \frac{x^{2}(r^{2}+1)^{\mu+1}}{(x^{2}+r^{2})^{\mu+1}}$ decreases. Differentiating $f(x)$, we have
\[f'(x)=\frac{2x(r^{2}+1)^{\mu+1}}{(x^{2}+r^{2})^{\mu+2}}(r^{2}-x^2\mu), \quad x\geq1, \mu>0.\]
The denominator is positive for all $x\geq1$ and $r>0,$ therefore $f'(x)<0$ provided that $0<r\leq\sqrt\mu.$ This completes the proof.
\end{proof}
\begin{proof}[Alternative proof] Using the Bernoulli inequality we can directly prove that the sequence $\{na_{n}\}$ is non-increasing under the stated condition.
	\begin{align*}
		na_n-(n+1)a_{n+1}&=\frac{n^{2}(r^{2}+1)^{\mu+1}}{(n^{2}+r^{2})^{\mu+1}}-\frac{(n+1)^{2}(r^{2}+1)^{\mu+1}}{((n+1)^{2}+r^{2})^{\mu+1}}\notag\\
		&= \left[\frac{(r^2+1)}{(n^2+r^2)((n+1)^2+r^2)}\right]^{\mu+1}b_n,
	\end{align*}	
	where 
	\begin{align*}
		b_n&= n^2 ((n+1)^2+r^2)^{\mu+1}-(n+1)^{2}(n^{2}+r^{2})^{\mu+1}\\
		&=n^2 (n^2+2n+1+r^2)^{\mu+1}-(n^2+2n+1)(n^{2}+r^{2})^{\mu+1}\notag\\
		&=n^2(n^{2}+r^{2})^{\mu+1}\left(1+\frac{2n+1}{n^{2}+r^{2}}\right)^{\mu+1}-n^2(n^{2}+r^{2})^{\mu+1}-(2n+1)(n^{2}+r^{2})^{\mu+1}\\
		&=n^2(n^{2}+r^{2})^{\mu+1}\left[\left(1+\frac{2n+1}{n^{2}+r^{2}}\right)^{\mu+1}-1\right]-(2n+1)(n^{2}+r^{2})^{\mu+1}\\
		&\ge n^2(n^{2}+r^{2})^{\mu+1}\frac{(2n+1)(\mu+1)}{n^{2}+r^{2}}-(2n+1)(n^{2}+r^{2})^{\mu+1}  \\
		&=(2n+1)(n^{2}+r^{2})^{\mu}[n^2(\mu+1)-(n^2+r^2)]\\
		&= (2n+1)(n^{2}+r^{2})^{\mu}[n^2\mu-r^2] \ge 0,\\
	\end{align*}
	provided $\mu>0 \;\mbox{and}\; 0<r\le \sqrt{\mu}$.	
\end{proof}
\begin{theorem}\label{th3}
If $\mu>0$ and $0<r\le \sqrt\frac{{(3+5\mu)-\sqrt{17\mu^{2}+26\mu+9}}}{2},$ then $\mathbb{F}_{\mu}(r;z)$ is starlike in
$\mathbb{U}$.
\end{theorem}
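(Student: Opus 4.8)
The plan is to invoke Fej\'er's criterion (Lemma~\ref{fejer}). The coefficients of $\mathbb{F}_\mu(r;z)$ read off from~\eqref{MT} are $a_n = n(r^2+1)^{\mu+1}/(n^2+r^2)^{\mu+1}$, which are manifestly non-negative with $a_1=1$. So it remains to verify the two monotonicity hypotheses of Lemma~\ref{fejer}, namely that $\{na_n\}_{n\ge1}$ and $\{na_n-(n+1)a_{n+1}\}_{n\ge1}$ are both non-increasing.

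For the first sequence I would reuse the function $f(x)=x^2(r^2+1)^{\mu+1}/(x^2+r^2)^{\mu+1}$ from the proof of Theorem~\ref{t2}, since $na_n=f(n)$. There it was shown that $f$ decreases for $0<r\le\sqrt\mu$, so I only need to check that the present, more restrictive, hypothesis implies $r^2\le\mu$. Writing $s_-=\tfrac12\big((3+5\mu)-\sqrt{17\mu^2+26\mu+9}\big)$ for the square of the stated bound, the inequality $s_-\le\mu$ rearranges to $3+3\mu\le\sqrt{17\mu^2+26\mu+9}$, and squaring turns this into $0\le 8\mu^2+8\mu$, which is clear. Hence $\{na_n\}$ is non-increasing.

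The real work is the second sequence. Since $na_n-(n+1)a_{n+1}=f(n)-f(n+1)$, this sequence is non-increasing exactly when $f(n)-2f(n+1)+f(n+2)\ge0$ for all $n$, i.e.\ when $f$ is convex along the integers; a sufficient condition is $f''(x)\ge0$ for $x\ge1$. I would therefore differentiate once more. Starting from the already computed $f'(x)=2x(r^2+1)^{\mu+1}(r^2-\mu x^2)(x^2+r^2)^{-(\mu+2)}$, the positive factor $2(r^2+1)^{\mu+1}(x^2+r^2)^{-(\mu+3)}$ can be pulled out of $f''$, leaving a polynomial whose sign controls $f''$. After collecting terms this polynomial is the biquadratic
\[
  h(x)=\mu(2\mu+1)x^4-(3+5\mu)r^2x^2+r^4,
\]
so the task reduces to showing $h(x)\ge0$ for $x\ge1$.

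Viewing $h$ as an upward-opening quadratic in $t=x^2$, its vertex sits at $t^\ast=(3+5\mu)r^2/\big(2\mu(2\mu+1)\big)$. The key observation is that the hypothesis $r^2\le s_-$ already forces $t^\ast\le1$: indeed $s_-$ together with $s_+=\tfrac12\big((3+5\mu)+\sqrt{17\mu^2+26\mu+9}\big)$ are the roots of $s^2-(3+5\mu)s+\mu(2\mu+1)$, so $s_-s_+=\mu(2\mu+1)$ and $s_-+s_+=3+5\mu$; the condition $t^\ast\le1$ amounts to $r^2\le 2s_-s_+/(s_-+s_+)$, and the right-hand side dominates $s_-$ since this reduces to $s_-\le s_+$. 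Consequently $h$ is increasing on $[1,\infty)$, so its minimum over $t\ge1$ is $h(1)=r^4-(3+5\mu)r^2+\mu(2\mu+1)$. Regarding $h(1)$ as a quadratic in $s=r^2$, its roots are precisely $s_\pm$, whence $h(1)\ge0$ exactly when $r^2\le s_-$ (or $r^2\ge s_+$) — and $r^2\le s_-$ is the standing hypothesis. Thus $f''\ge0$ on $[1,\infty)$, both monotonicity conditions of Lemma~\ref{fejer} hold, and $\mathbb{F}_\mu(r;z)\in\mathscr{S}^*$. I expect the main obstacle to be the bookkeeping in forming $f''$ and, just as importantly, noticing that the stated bound simultaneously pins the vertex to $t^\ast\le1$, which is what makes the single check $h(1)\ge0$ sufficient rather than forcing one to handle an interior minimum.
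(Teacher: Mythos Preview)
Your proof is correct and follows essentially the same route as the paper: apply Fej\'er's criterion, reduce the convexity of $\{na_n\}$ to $f''(x)\ge0$ for $x\ge1$, and argue that the biquadratic numerator is increasing on $[1,\infty)$ so that only its value at $x=1$ needs checking. The paper obtains that monotonicity from $\phi'(x)\ge0$ under the auxiliary bound $r\le\sqrt{2\mu(2\mu+1)/(5\mu+3)}$, whereas you phrase the same thing as the vertex condition $t^\ast\le1$; you are in fact slightly more careful than the paper in verifying explicitly that the hypothesis $r^2\le s_-$ forces both $r^2\le\mu$ and $t^\ast\le1$.
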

\begin{proof}
We already proved in Theorem \ref{t2} that $\{na_n\}$ is non-increasing for all $0<r\le \sqrt{\mu}$.
To show that $\mathbb{F}_{\mu}(r;z)$ is starlike in $\mathbb{U}$, using Lemma
\ref{fejer}, it is sufficient to show that the sequence
$\{na_n-(n+1)a_{n+1}\}$ is also non-increasing. That is
\begin{align*}
&na_n-2(n+1)a_{n+1}+(n+2)a_{n+2}\ge 0\\
\iff& (r^2+1)^{\mu+1} \left[\frac{n^2}{(n^2+r^2)^{\mu+1}}- 2\frac{(n+1)^2}{((n+1)^2+r^2)^{\mu+1}}+\frac{(n+2)^2}{((n+2)^2+r^2)^{\mu+1}}\right]\ge 0\\
\iff&[f(n)-2f(n+1)+f(n+2)]\ge 0,
\end{align*}
where
$$f(x)=\frac{x^2}{(x^2+r^2)^{\mu+1}},\quad x\ge 1, \mu>0. $$
To show $[f(n)-2f(n+1)+f(n+2)]\ge 0$, $n=1,2,3,4,\ldots$, it is
sufficient to prove that $f(x)$ is a convex function in the real
sense or that $f''(x)\ge 0, x\ge 1$. Differentiating twice, we have
\begin{equation*}
f''(x)=\frac{2[x^4(\mu+2\mu^{2})-x^{2}(3+5\mu )r^{2}+r^4]}{(x^2+r^2)^{\mu+3}},\quad x\ge 1, \mu>0.
\end{equation*}
The denominator is positive for all $x\ge 1$ and $r>0$. Let $\phi(x)=x^4(\mu+2\mu^{2})-x^{2}(3+5\mu )r^{2}+r^4$.
Obviously $\phi'(x)=4x^{3}(2\mu^{2}+\mu)-2(3+5\mu)r^{2}x\ge 0$ for all $x\ge 1, \mu>0 $ and $0<r\le \sqrt{\frac{2(2\mu^{2}+\mu)}{5\mu+3}}$. Thus $f''(x)\ge 0$ provided $\phi(1)\ge 0$,
which in turn gives $0<r\le \sqrt\frac{{(5\mu+3)-\sqrt{17\mu^{2}+26\mu+9}}}{2}$ . This completes the proof.
\end{proof}
\begin{theorem}\label{t4}
For  $\mu>0$ and $0<r\le \sqrt{\frac{2\mu+1}{3}}$,
\begin{equation*}
   \Re\bigg( {\frac{\mathbb{F}_{\mu}(r;z)}{z}}\bigg) >\frac{1}{2},
   \quad z\in \mathbb{U}.
\end{equation*}
\end{theorem}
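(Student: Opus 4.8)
The plan is to recognize $\mathbb{F}_{\mu}(r;z)/z$ as a power series to which Lemma~\ref{f1} applies directly. Writing
\[
\frac{\mathbb{F}_{\mu}(r;z)}{z}=\sum_{n=1}^{\infty}a_n z^{n-1},\qquad
a_n=\frac{n(r^2+1)^{\mu+1}}{(n^2+r^2)^{\mu+1}},
\]
we immediately have $a_n>0$ and $a_1=1$, so the asserted inequality follows from Lemma~\ref{f1} as soon as we verify that $\{a_n\}_{n\ge 1}$ is non-increasing and convex in the sense $a_1-a_2\ge a_2-a_3\ge\cdots\ge 0$.

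Since the factor $(r^2+1)^{\mu+1}$ is a positive constant, both monotonicity and convexity of the sequence are inherited from the real function $g(x)=x/(x^2+r^2)^{\mu+1}$ for $x\ge 1$, via $a_n\propto g(n)$. Recall the observation from the preliminaries that a decreasing convex function of a real variable yields a non-increasing convex sequence. I would therefore reduce the whole statement to showing $g'(x)\le 0$ and $g''(x)\ge 0$ for all $x\ge 1$, under the hypothesis $0<r\le\sqrt{(2\mu+1)/3}$.

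First I would compute
\[
g'(x)=\frac{r^2-(2\mu+1)x^2}{(x^2+r^2)^{\mu+2}},
\]
so that $g'(x)<0$ precisely when $x^2>r^2/(2\mu+1)$; for $x\ge 1$ this is guaranteed by $r^2<2\mu+1$, which is implied by the stated bound. The decisive computation is the second derivative, which after simplification should factor as
\[
g''(x)=\frac{2(\mu+1)x\,\big[(2\mu+1)x^2-3r^2\big]}{(x^2+r^2)^{\mu+3}}.
\]
Hence $g''(x)\ge 0$ for all $x\ge 1$ exactly when $(2\mu+1)-3r^2\ge 0$, i.e. $r\le\sqrt{(2\mu+1)/3}$, which is precisely the hypothesis. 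This establishes that $g$ is decreasing and convex on $[1,\infty)$, so $\{a_n\}$ satisfies every requirement of Lemma~\ref{f1}, completing the proof.

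I expect the main obstacle to be the second-derivative computation: one must differentiate carefully and check that the bracket collapses to the clean form $(2\mu+1)x^2-3r^2$, since it is exactly this factorization that produces the sharp threshold $\sqrt{(2\mu+1)/3}$ appearing in the statement. By contrast, the monotonicity step and the verification of $a_1=1$ and positivity are routine.
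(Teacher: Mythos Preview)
Your proposal is correct and follows essentially the same approach as the paper: both reduce the claim to Lemma~\ref{f1} by showing that $g(x)=x/(x^2+r^2)^{\mu+1}$ is decreasing and convex on $[1,\infty)$ via the computations of $g'$ and $g''$. Your factorization $g''(x)=2(\mu+1)x\big[(2\mu+1)x^2-3r^2\big]/(x^2+r^2)^{\mu+3}$ agrees with the paper's form $2x\big[(2\mu^2+3\mu+1)x^2-3(\mu+1)r^2\big]/(x^2+r^2)^{\mu+3}$ after noting $2\mu^2+3\mu+1=(\mu+1)(2\mu+1)$.
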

\begin{proof}
First we prove that
\begin{equation*}
\left\{ {{a_n}} \right\}_{n = 1}^\infty  = \left\{ \frac{n(r^2+1)^{\mu+1}}{(n^2+r^2)^{\mu+1}} \right\}_{n = 1}^\infty
\end{equation*}
is a decreasing sequence, i.e.
\begin{equation*}
   a_n-a_{n+1}\ge 0, \quad n \in \mathbb{N}.
\end{equation*}
Now
\begin{align*}
   & a_n-a_{n + 1}\geq 0\\
   \iff& (r^2+1)^{\mu+1}\left[\frac{n}{(n^2+r^2)^{\mu+1}}-\frac{n+1}{((n+1)^2+r^2)^{\mu+1}}\right]\geq 0\\
   \iff& (r^2+1)^{\mu+1}\left[f(n)-f(n+1)\right]\geq 0,
\end{align*}
where\begin{equation}\label{f2}
f(x)=\frac{x}{(x^2+r^2)^{\mu+1}},\quad x\ge 1, \mu>0.
\end{equation}
To show $f(n)-f(n+1)\ge 0$, $n=1,2,3,\ldots,$ it is sufficient to prove that $f(x)$ is a decreasing function in the real sense or
that $f'(x)<0$, $x\ge 1$. We have
\begin{equation*}
f'(x)=\frac{r^2-(1+2\mu)x^{2}}{(x^2+r^2)^{\mu+2}}\le 0,
\quad x\ge 1, \mu>0\;{\rm and}\; 0<r\le \sqrt{1+2\mu}.
\end{equation*}
Next we prove that $\left\{{{a_n}} \right\}_{n = 1}^\infty$ is a convex decreasing sequence. For this we show
\begin{equation*}
   a_{n+2}-a_{n+1} \ge a_{n+1}-a_n, \quad n \in \mathbb{N}.
\end{equation*}
Now
\begin{align*}
   & a_n-2a_{n + 1}+a_{n + 2}\geq 0\\
   \iff & (r^2+1)^{\mu+1}\left[\frac{n}{(n^2+r^2)^{\mu+1}}-2\frac{n+1}{((n+1)^2+r^2)^{\mu+1}}+\frac{n+2}{((n+2)^2+r^2)^{\mu+1}}    \right]\geq 0\\
   \iff& (r^2+1)^{\mu+1} \left[f(n)-2f(n+1)+f(n+2)\right]\geq 0,
\end{align*}
where $f(x)$ is given by \eqref{f2}. To show $\left[f(n)+f(n+2)-2f(n+1)\right]\geq0$ ,
$n=1,2,3,4,\ldots$, it suffices to prove that $f(x)$ is a convex function in the real sense or that $f''(x)\geq0$, $x\geq1$. It can be easily verified that 
\begin{equation*}
   f''(x)=\frac{2x[(2\mu^{2}+3\mu+1)x^{2}-3(\mu+1)r^{2}]}{(x^{2}+r^{2})^{\mu+3}}\geq 0   
\end{equation*}
for all
\begin{equation*}
	x\geq 1, \mu>0 \;{\rm and}\; 0<r\le \sqrt{\frac{2\mu+1}{3}}.
\end{equation*}
Thus $\left\{{{a_n}} \right\}_{n = 1}^\infty$ is a convex decreasing sequence.
Now applying Lemma \ref{f1} to $\left\{{{a_n}} \right\}_{n = 1}^\infty$, we have
\begin{equation*}\label{sfactor}
   \Re \bigg(\sum\limits_{n = 1}^\infty  {{a_n}{z^{n - 1}}}
    \bigg) > \frac12, \quad z\in \mathbb{U},
\end{equation*}
which is equivalent to
\begin{equation*}\label{sub}
   \Re\bigg(\frac{\mathbb{F}_{\mu}(r;z)}{z}\bigg) >\frac12, \quad z\in \mathbb{U}.
\end{equation*}
\end{proof}
\begin{theorem}\label{t5}
For $\mu>0 $ and $0<r\le \sqrt\frac{{(5\mu+3)-\sqrt{17\mu^{2}+26\mu+9}}}{2}$,
\begin{equation*}
   \Re\big( {\mathbb{F}'_{\mu}(r;z)}\big)
   >\frac{1}{2}, \quad z\in \mathbb{U}.
\end{equation*}
\end{theorem}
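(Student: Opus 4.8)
The plan is to apply Lemma~\ref{f1} to the sequence $b_n := n a_n$, where $a_n = n(r^2+1)^{\mu+1}/(n^2+r^2)^{\mu+1}$ are the Taylor coefficients of $\mathbb{F}_\mu(r;z)$ appearing in \eqref{MT}. The point is the identity
\[
  \mathbb{F}'_\mu(r;z) = \sum_{n=1}^\infty n a_n z^{n-1} = \sum_{n=1}^\infty b_n z^{n-1},
\]
so that the desired bound $\Re(\mathbb{F}'_\mu(r;z)) > \tfrac12$ is precisely the conclusion of Lemma~\ref{f1} applied to $\{b_n\}$. This parallels the proof of Theorem~\ref{t4}, where Lemma~\ref{f1} was instead applied to $\{a_n\}$ in order to control $\mathbb{F}_\mu(r;z)/z$.

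To invoke Lemma~\ref{f1} I must verify that $\{b_n\}$ is a non-negative, non-increasing, convex sequence with $b_1 = 1$. Non-negativity is immediate, and $b_1 = 1\cdot a_1 = 1$ since $a_1 = 1$. The remaining two properties have, in effect, already been proved: the monotonicity of $\{b_n\} = \{na_n\}$ is exactly the content of Theorem~\ref{t2} (established for $0 < r \le \sqrt\mu$), while the convexity of $\{b_n\}$, namely $na_n - 2(n+1)a_{n+1} + (n+2)a_{n+2} \ge 0$, is precisely the assertion that $\{na_n - (n+1)a_{n+1}\}$ is non-increasing, which was verified inside the proof of Theorem~\ref{th3} under the hypothesis $0 < r \le \sqrt{((5\mu+3)-\sqrt{17\mu^2+26\mu+9})/2}$.

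It then only remains to confirm that the convexity range is the narrower of the two, so that both properties hold under the single stated hypothesis. This reduces to the elementary inequality $((5\mu+3)-\sqrt{17\mu^2+26\mu+9})/2 \le \mu$, i.e.\ $3\mu + 3 \le \sqrt{17\mu^2+26\mu+9}$; squaring both (non-negative) sides yields $8\mu(\mu+1) \ge 0$, which holds for all $\mu > 0$. With both hypotheses of Lemma~\ref{f1} thus secured, the lemma delivers the claim at once. I expect no genuine obstacle here: the one nontrivial estimate, the convexity of $\{na_n\}$, was already carried out in Theorem~\ref{th3}, and the only care required is the bookkeeping of the two radius conditions to see that the more restrictive one governs.
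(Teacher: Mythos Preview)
Your proposal is correct and follows essentially the same route as the paper: both apply Lemma~\ref{f1} to the sequence $b_n=na_n=n^2(r^2+1)^{\mu+1}/(n^2+r^2)^{\mu+1}$, citing Theorem~\ref{t2} for monotonicity and the proof of Theorem~\ref{th3} for convexity. Your additional explicit verification that the convexity bound on $r$ is the more restrictive one (so that both properties hold under the stated hypothesis) is a detail the paper leaves implicit, but is otherwise the only difference.
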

\begin{proof}
From \eqref{MT},
\begin{equation*}
   \mathbb{F}'_{\mu}(r;z)= 1+ \sum\limits_{n=2}^{\infty}{\frac{n^2(r^2+1)^{\mu+1}}{(n^2+r^2)^{\mu+1}}z^{n-1}}.
\end{equation*}
We have shown in Theorems~\ref{t2} and~\ref{th3} that the sequence
\begin{equation*}
   \frac{n^2(r^2+1)^{\mu+1}}{(n^2+r^2)^{\mu+1}}
\end{equation*}
is convex and non-increasing, and so the result follows from Lemma~\ref{f1}.
\end{proof}

\section{Close-to-convexity and starlikeness of $\mathbb{Q}_\mu(r;z)$}

\begin{theorem}\label{t21}
If $\mu>0$ and $0<r\le \sqrt{\mu},$ then $\mathbb{Q}_{\mu}(r;z)$ is close-to-convex
w.r.t.\ the starlike function $z/(1-z).$
\end{theorem}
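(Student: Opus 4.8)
The plan is to verify the first hypothesis of Lemma~\ref{ozaki} for the coefficients
\[
 a_n=\frac{n!\,(r^2+1)^{\mu+1}}{((n!)^2+r^2)^{\mu+1}},\qquad a_1=1,
\]
of $\mathbb{Q}_\mu(r;z)$ in~\eqref{eq:def Q}; that is, to show that $\{na_n\}_{n\ge1}$ is non-increasing (non-negativity being clear, since every term is positive). In Theorem~\ref{t2} the analogous statement for $\mathbb{F}_\mu$ was obtained by checking that a smooth interpolating function decreases, but here the factorial makes such a real-variable argument awkward (one would have to pass to the Gamma function). I would therefore follow the \emph{alternative} proof of Theorem~\ref{t2}, which uses only Bernoulli's inequality and adapts cleanly to factorials.

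First I would form the difference. Using $(n+1)!=(n+1)\,n!$, so that $((n+1)!)^2=(n+1)^2(n!)^2$, one obtains
\[
 na_n-(n+1)a_{n+1}
 =\frac{n!\,(r^2+1)^{\mu+1}\,c_n}{((n!)^2+r^2)^{\mu+1}\,((n+1)^2(n!)^2+r^2)^{\mu+1}},
\]
where
\[
 c_n=n\bigl((n+1)^2(n!)^2+r^2\bigr)^{\mu+1}-(n+1)^2\bigl((n!)^2+r^2\bigr)^{\mu+1}.
\]
Since the prefactor is positive, the whole task reduces to proving $c_n\ge0$ for every $n\ge1$. Pulling $((n!)^2+r^2)^{\mu+1}$ out of the first term via
\[
 (n+1)^2(n!)^2+r^2=\bigl((n!)^2+r^2\bigr)\Bigl(1+\tfrac{((n+1)^2-1)(n!)^2}{(n!)^2+r^2}\Bigr)
\]
puts $c_n$ into the shape $((n!)^2+r^2)^{\mu+1}\bigl[n(1+x)^{\mu+1}-(n+1)^2\bigr]$ with $x\ge0$.

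Now comes the one real estimate. By Bernoulli's inequality $(1+x)^{\mu+1}\ge1+(\mu+1)x$ (valid since $\mu+1>1$), and using $(n+1)^2-1=n(n+2)$, the bracket is at least
\[
 n^2(\mu+1)(n+2)\,\frac{(n!)^2}{(n!)^2+r^2}-(n^2+n+1).
\]
Because $n!\ge1$ we have $\tfrac{(n!)^2}{(n!)^2+r^2}\ge\tfrac1{1+r^2}$, and the hypothesis $r\le\sqrt\mu$ gives $1+r^2\le1+\mu$; together these bound the first term below by $n^2(n+2)$, so the bracket is at least $n^2(n+2)-(n^2+n+1)=(n-1)(n+1)^2\ge0$. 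This yields $c_n\ge0$, hence $\{na_n\}$ is non-increasing, and Lemma~\ref{ozaki} gives close-to-convexity with respect to $z/(1-z)$.

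The step that needs care—rather than being a genuine obstacle—is this last estimate: the crude bound $(n!)^2/((n!)^2+r^2)\ge1/(1+r^2)$ is wasteful for $n\ge2$ (where factorial growth makes $c_n\ge0$ hold with enormous room), but it is \emph{exactly} tight at $n=1$, where $n!=1$ and $(n-1)(n+1)^2=0$. Thus the hypothesis $r\le\sqrt\mu$ is precisely what is needed to close the $n=1$ case, mirroring the way the same bound arose in Theorem~\ref{t2}.
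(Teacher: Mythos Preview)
Your proof is correct and follows essentially the same approach as the paper: both apply Lemma~\ref{ozaki} by showing $\{na_n\}$ is non-increasing via the same factorisation and Bernoulli's inequality. The only cosmetic difference is in the final algebraic estimate---the paper drops a factor of~$n$ in two places to arrive at the clean condition $\mu(n!)^2-r^2\ge0$, whereas you bound the fraction $(n!)^2/((n!)^2+r^2)$ by $1/(1+\mu)$ and reduce to $(n-1)(n+1)^2\ge0$; both routes yield the same constraint $r\le\sqrt{\mu}$.
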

\begin{proof}
  Again, we apply Lemma~\ref{ozaki}.
	Let $$C_n={\frac{n!(r^2+1)^{\mu+1}}{((n!)^2+r^2)^{\mu+1}}}.$$
	Using the Bernoulli inequality we can directly prove that the sequence $\{nC_n\}$ is non-increasing  under the stated condition. We have
\begin{align*}
nC_n-(n+1)C_{n+1}&=\frac{n\; n!(r^2+1)^{\mu+1}}{{[(n!)^2+r^2]^{\mu+1}}}-\frac{(n+1)\; (n+1)!(r^2+1)^{\mu+1}}{{[((n+1)!)^2+r^2]^{\mu+1}}}\notag \\
&=\left[\frac{(r^2+1)}{((n!)^2+r^2)(((n+1)!)^2+r^2)}\right]^{\mu+1}B_n, \notag \\
\end{align*}	
where
\begin{align*}
	B_n&=n\; n![((n+1)!)^2+r^2]^{\mu+1}-(n+1)(n+1)!((n!)^2+r^2)^{\mu+1} \\
&=n\;n![((n^2+2n+1)(n)!)^2+r^2]^{\mu+1}-(n^2+2n+1)\;n!\;((n!)^2+r^2)^{\mu+1} \notag \\
&=n\;n![(n^2+2n)((n)!)^2+((n)!)^2+r^2]^{\mu+1}-(n^2+2n+1)\;n!\;((n!)^2+r^2)^{\mu+1} \notag \\
&=n\;n![(n!)^2+r^2]^{\mu+1}\left[1+\frac{(n^2+2n)((n)!)^2}{(n!)^2+r^2}\right]^{\mu+1}-(n^2+2n+1)\;n!\;((n!)^2+r^2)^{\mu+1} \notag \\
&\ge n\;n![(n!)^2+r^2]^{\mu+1}\left[1+\frac{(\mu+1)(n^2+2n)((n)!)^2}{(n!)^2+r^2}\right]-(n^2+2n+1)\;n!\;((n!)^2+r^2)^{\mu+1} \notag \\
&=n\;n![(n!)^2+r^2]^{\mu+1}+n(n^2+2n)(\mu+1)((n)!)^3[(n!)^2+r^2]^{\mu}-(n^2+2n+1)\;n!\;((n!)^2+r^2)^{\mu+1} \notag \\
&\ge\;n![(n!)^2+r^2]^{\mu+1}+(n^2+2n)(\mu+1)((n)!)^3[(n!)^2+r^2]^{\mu}-(n^2+2n+1)\;n!\;((n!)^2+r^2)^{\mu+1} \notag \\
&=(n^2+2n)(\mu+1)((n)!)^3[(n!)^2+r^2]^{\mu}-(n^2+2n)\;n!\;((n!)^2+r^2)^{\mu+1} \notag \\
&=(n)!(n^2+2n)((n!)^2+r^2)^{\mu}(\mu (n!)^2-r^2)\ge 0,\notag
\end{align*}
provided that $\mu-r^2\ge0 $ or $0<r\le\sqrt{\mu}$.
\end{proof}

\begin{theorem}\label{thm:Q star}
If $\mu\geq2$ and $0<r\le \sqrt{\mu},$ then $\mathbb{Q}_{\mu}(r;z)$ is starlike in~$\mathbb U$.
\end{theorem}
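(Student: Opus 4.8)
The plan is to invoke Fej\'er's criterion (Lemma~\ref{fejer}) for the coefficient sequence
\[
  C_n=\frac{n!\,(r^2+1)^{\mu+1}}{((n!)^2+r^2)^{\mu+1}}
\]
of $\mathbb{Q}_{\mu}(r;z)=z+\sum_{n\ge2}C_nz^n$, noting that $C_1=1$ and $C_n\ge0$. Theorem~\ref{t21} already furnishes the first hypothesis of the lemma, namely that $\{nC_n\}$ is non-increasing for $0<r\le\sqrt{\mu}$. Hence the only thing left to check is that $\{nC_n-(n+1)C_{n+1}\}$ is non-increasing too, i.e.\ that the second difference
\[
  nC_n-2(n+1)C_{n+1}+(n+2)C_{n+2}\ge0,\qquad n\ge1,
\]
is non-negative. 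Once this is shown, Fej\'er's lemma immediately yields $\mathbb{Q}_{\mu}(r;z)\in\mathscr{S}^*$.

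First I would exploit the super-exponential decay produced by the factorials. Since $(n+2)C_{n+2}\ge0$, it suffices to prove the simpler one-step inequality $nC_n\ge2(n+1)C_{n+1}$. Writing $m=n!$ and cancelling the common factor $(r^2+1)^{\mu+1}$, this is equivalent to
\[
  \Big(\frac{(n+1)^2m^2+r^2}{m^2+r^2}\Big)^{\mu+1}\ge\frac{2(n+1)^2}{n}.
\]
I expect the case $n=1$ (where $m=1$) to be the delicate, essentially tight one: there the inequality reads $\big((4+r^2)/(1+r^2)\big)^{\mu+1}\ge8$. Since $(4+t)/(1+t)$ is decreasing in $t$, it is enough to treat the extremal value $r^2=\mu$, and one checks that $h(\mu):=\big((4+\mu)/(1+\mu)\big)^{\mu+1}$ satisfies $h(2)=8$ and is non-decreasing for $\mu\ge2$, the latter following from $\tfrac{d}{d\mu}\ln h(\mu)=\ln\frac{4+\mu}{1+\mu}-\frac{3}{4+\mu}\ge0$ via the elementary bound $\ln(1+x)\ge x/(1+x)$. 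This is precisely where the hypothesis $\mu\ge2$ enters, and equality at $(\mu,r^2)=(2,2)$ shows it cannot be relaxed.

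For $n\ge2$ I would argue with room to spare using the Bernoulli inequality $(1+\delta)^{\mu+1}\ge1+(\mu+1)\delta$ with $\delta=(n^2+2n)\,m^2/(m^2+r^2)$. Using $\mu\ge2$, $r^2\le\mu$ and $m^2=(n!)^2\ge4$, one obtains $(\mu+1)m^2/(m^2+r^2)\ge2$, whence the left-hand side is at least $1+2(n^2+2n)$, which dominates $2(n+1)^2/n\le2n+5$ for all $n\ge2$ because $(n-1)(n+2)\ge0$. Combining the two cases gives $nC_n\ge2(n+1)C_{n+1}$ for every $n\ge1$, so the second difference above is non-negative and the proof concludes via Fej\'er's lemma.

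The main obstacle, and the reason the argument must deviate from that of Theorem~\ref{th3}, is that the factorial $n!$ cannot be replaced by a smooth convex function of a real variable, so the clean ``show $f''(x)\ge0$'' route is unavailable. The decisive simplification is discarding the non-negative term $(n+2)C_{n+2}$, which collapses the genuine second-difference condition into a single ratio inequality amenable to the Bernoulli estimate; the price paid is the stronger hypothesis $\mu\ge2$, whose borderline value $\mu=2$ confirms its optimality for this approach.
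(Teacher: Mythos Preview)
Your argument is correct. Both you and the paper invoke Fej\'er's criterion and take the monotonicity of $\{nC_n\}$ from Theorem~\ref{t21}, so the only issue is convexity of that sequence, and here the two proofs diverge substantially. The paper interpolates by setting $g(x)=x\Gamma(x+1)(1+r^2)^{\mu+1}/(\Gamma(x+1)^2+r^2)^{\mu+1}$, verifies the first few second differences $h(1)\ge h(2)\ge h(3)\ge h(4)$ by hand (the step $h(1)\ge h(2)$ is where $\mu\ge2$ enters, via exactly the same inequality $\big((4+\mu)/(1+\mu)\big)^{\mu+1}\ge8$ that you isolate), and then establishes $g''(x)\ge0$ for $x\ge4$ through a fairly heavy computation involving digamma and trigamma bounds together with polynomial inequalities certified by cylindrical algebraic decomposition. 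Your route is considerably shorter and fully elementary: by discarding the non-negative term $(n+2)C_{n+2}$ you reduce convexity to the single ratio inequality $nC_n\ge 2(n+1)C_{n+1}$, handle $n=1$ exactly as the paper does, and dispatch $n\ge2$ with a one-line Bernoulli estimate exploiting the super-exponential decay of the factorial. What the paper's approach buys is a genuine convexity statement for the interpolated function $g$, whereas yours gives only the discrete inequality actually needed; what your approach buys is the complete elimination of special-function estimates and computer algebra. One small remark: your aside that ``$n!$ cannot be replaced by a smooth convex function of a real variable'' is not quite accurate, since the paper does precisely this via $\Gamma(x+1)$---but the resulting second-derivative analysis is indeed far less clean than in Theorem~\ref{th3}, which is the point you are making.
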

\begin{proof}
   We have shown in the preceding proof that $\{nC_n\}_{n\geq1}$ is non-increasing.
   Now we prove that it is convex, provided that $\mu\geq2,$ in order to apply
   Lemma~\ref{fejer}.
       We have $nC_n=g(n),$ where
   \[
      g(x)=\frac{x \Gamma(x+1)(1+r^2)^{\mu+1}}{\big(\Gamma(x+1)^2+r^2\big)^{\mu+1}}, \quad x\geq0.
   \]
   Define
   \[
     h(x) = g(x)-g(x+1).
   \]
   Our goal is to show that $\{h(n)\}_{n\geq1}$ is non-increasing. We have
   \begin{align*}
     h(1)-h(2) &= 1-8\Big(\frac{r^2+1}{r^2+4}\Big)^{\mu+1}+18 \Big(\frac{r^2+1}{r^2+36}\Big)^{\mu+1}\\
     &>1-8\Big(\frac{r^2+1}{r^2+4}\Big)^{\mu+1} \\
     &\geq 1-8\Big(\frac{\mu+1}{\mu+4}\Big)^{\mu+1} \geq 0,
   \end{align*}
   where the last estimate follows from $\mu\geq2$ and the fact that $\Big(\frac{\mu+1}{\mu+4}\Big)^{\mu+1}$
   decreases w.r.t.\ $\mu$. (This is the only step where $\mu\geq2$ is used; the rest of
   the proof works for $\mu>0$.) Very similarly, it can be shown that
   \[
     h(2)\geq h(3)\geq h(4).
   \]
   To complete the proof, we show that $g(x)$ is a convex function for $x\geq4.$
   We have
  \[
    g''(x)=\Gamma(x+1)\big(\Gamma(x+1)^2+r^2\big)^{-\mu-3}
    (1+r^2)^{\mu+1}A(x),
  \]
  where
  \begin{multline}\label{eq:def A}
    A(x)=2(r^2+\Gamma(x+1)^2)\big[r^2-(2\mu+1)\Gamma(x+1)^2\big]\psi(x+1) \\
    +x\big[r^4-2r^2(4\mu+3)\Gamma(x+1)^2+(2\mu+1)^2\Gamma(x+1)^4\big] \psi(x+1)^2 \\
    +x\big(r^2+\Gamma(x+1)^2\big)\big(r^2-(2\mu+1)\Gamma(x+1)^2\big)\psi'(x+1),
  \end{multline}
  and $\psi=\Gamma'/\Gamma$ denotes the digamma function.
  We will apply the following estimates:
  \begin{equation}\label{eq:r mu c}
    -2r^2(4\mu+3)c+(2\mu+1)^2c^2
     \geq 0,\quad \mu>0,0\leq r\leq \sqrt{\mu}, c\geq2,
  \end{equation}
  \begin{equation}\label{eq:psi upper}
    \psi(x) < \log x -\frac{1}{2x}, \quad x>1,
  \end{equation}
  \begin{equation}\label{eq:psi lower}
    \psi(x) > \log x -\frac{1}{x}, \quad x>1,
  \end{equation}
  \begin{equation}\label{eq:trigamma}
    \psi'(x)< \frac1x + \frac{1}{x^2}, \quad x>0,
   \end{equation}
  \begin{equation}\label{eq:sqrt}
     \log(x+1)-\frac{1}{2(x+1)} \leq \sqrt{x},\quad x\geq1,
  \end{equation}
  \begin{equation}\label{eq:19/10}
    \Big(\log(x+1)-\frac{1}{x+1}\Big)^2 \geq \frac{19}{10},\quad x\geq4,
  \end{equation}
  \begin{multline}\label{eq:total}
     2(r^2+c)(r^2-(2\mu+1)c)\sqrt{x}
      +x\big(r^4 -2r^2(4\mu+3)c+(2\mu+1)^2c^2\big) \frac{19}{10} \\
      +x(r^2+c)(r^2-(2\mu+1)c)\Big(\frac{1}{x+1}+\frac{1}{(x+1)^2}\Big)\geq0, \\
     \quad x\geq 4,c\geq \Gamma(5)^2,\mu>0,0\leq r\leq \sqrt{\mu}.
  \end{multline}
  Note that~\eqref{eq:r mu c} and~\eqref{eq:total} are polynomial inequalities
  with polynomial constraints, which can be proven by cylindrical algebraic decomposition (CAD),
  using a computer algebra system. The inequalities~\eqref{eq:sqrt} and~\eqref{eq:19/10}
  are very easy to show. The estimates~\eqref{eq:psi upper} and~\eqref{eq:psi lower}
  are found on p.~288 of~\cite{mitrinovic}, and~\eqref{eq:trigamma} is an easy
  consequence of~(26) in~\cite{merkle}.
  
  For the rest of the proof, we may assume $x\geq4.$
  The factors in front of $\psi(x+1)$ and $\psi'(x+1)$ in~\eqref{eq:def A} are clearly negative, whereas
  the factor in front of $\psi(x+1)^2$ is non-negative, by~\eqref{eq:r mu c}.
  Thus, \eqref{eq:psi upper}--\eqref{eq:trigamma} imply
  \begin{multline*}
    A(x)\geq 2(r^2+\Gamma(x+1)^2)(r^2-(2\mu+1)\Gamma(x+1)^2)\Big(\log(x+1)-\frac{1}{2(x+1)}\Big) \\
    +x(r^4-2r^2(4\mu+3)\Gamma(x+1)^2+(2\mu+1)^2\Gamma(x+1)^4) \Big(\log(x+1)-\frac{1}{x+1}\Big)^2 \\
    +x(r^2+\Gamma(x+1)^2)(r^2-(2\mu+1)\Gamma(x+1)^2)\Big(\frac{1}{x+1}+\frac{1}{(x+1)^2}\Big).
  \end{multline*}
  Using~\eqref{eq:sqrt} and~\eqref{eq:19/10}, we thus obtain
  \begin{multline*}
    A(x)\geq 2(r^2+\Gamma(x+1)^2)(r^2-(2\mu+1)\Gamma(x+1)^2)\sqrt{x} \\
    +x(r^4-2r^2(4\mu+3)\Gamma(x+1)^2+(2\mu+1)^2\Gamma(x+1)^4) \frac{19}{10} \\
    +x(r^2+\Gamma(x+1)^2)(r^2-(2\mu+1)\Gamma(x+1)^2)\Big(\frac{1}{x+1}+\frac{1}{(x+1)^2}\Big).
  \end{multline*}
  This is non-negative by~\eqref{eq:total}. The convexity of~$g(x)$ for $x\geq4$ is established,
  which completes the proof.
\end{proof}

\begin{theorem}\label{t22}
	For  $\mu>0$ and $0<r\le \sqrt{\mu}$,
	\begin{equation*}
		\Re\bigg({\frac{\mathbb{Q}_{\mu}(r;z)}{z}}\bigg)>\frac{1}{2},
		\quad z\in \mathbb{U}.
	\end{equation*}
\end{theorem}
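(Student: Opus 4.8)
The plan is to apply Lemma~\ref{f1} to the coefficient sequence of $\mathbb{Q}_{\mu}(r;z)/z$, namely
\[
  C_n=\frac{n!\,(r^2+1)^{\mu+1}}{\big((n!)^2+r^2\big)^{\mu+1}},\qquad n\geq1 .
\]
These numbers are non-negative and satisfy $C_1=1$, so it remains only to check that $\{C_n\}$ is non-increasing and convex. Rather than treating decrease and convexity separately (as in the proof of Theorem~\ref{t4}), I would prove the single, stronger estimate
\[
  C_{n+1}\leq \tfrac12\,C_n,\qquad n\geq1 .
\]
This at once yields $C_{n+1}\leq C_n$ (monotonicity) and, since the terms are non-negative,
\[
  C_n-2C_{n+1}+C_{n+2}\geq C_n-2\cdot\tfrac12 C_n+0=0,
\]
which is exactly convexity. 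Both hypotheses of Lemma~\ref{f1} then hold, and the conclusion $\Re\big(\mathbb{Q}_{\mu}(r;z)/z\big)>\tfrac12$ follows at once.

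The substance of the proof is therefore the ratio bound. Writing $m=(n!)^2$ and computing
\[
  \frac{C_{n+1}}{C_n}=(n+1)\left(\frac{m+r^2}{(n+1)^2 m+r^2}\right)^{\mu+1},
\]
the inequality $C_{n+1}\leq\tfrac12 C_n$ is equivalent to
\[
  \left(\frac{(n+1)^2 m+r^2}{m+r^2}\right)^{\mu+1}\geq 2(n+1).
\]
The base on the left is decreasing in $r^2$, so the estimate is hardest at the largest admissible value $r^2=\mu$; it thus suffices to show $\big(\tfrac{(n+1)^2 m+\mu}{m+\mu}\big)^{\mu+1}\geq 2(n+1)$ for all $\mu>0$. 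I would establish this by a one-variable argument: setting $q(\mu)=(\mu+1)\log\frac{(n+1)^2 m+\mu}{m+\mu}-\log\big(2(n+1)\big)$ and using $\log(1+t)\geq t/(1+t)$ together with $m\geq1$, one finds $q'(\mu)\geq0$, while $q(0^+)=\log\frac{n+1}{2}\geq0$ for $n\geq1$; hence $q(\mu)\geq0$ throughout.

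The main obstacle is the uniformity and tightness of this ratio bound. The borderline instance is $n=1$ with $\mu\to0^{+}$, where $C_2/C_1\to\tfrac12$, so that no slack is available and the monotonicity-in-$\mu$ estimate for $q$ must be argued carefully. This is precisely where the factorial growth helps: because $n!$ grows super-geometrically, $\{C_n\}$ decays fast enough that the crude bound $C_{n+1}\leq\tfrac12 C_n$ already forces convexity. Consequently the delicate digamma/trigamma analysis used for the starlikeness of $\mathbb{Q}_{\mu}(r;z)$ in Theorem~\ref{thm:Q star} (which concerns the different sequence $\{nC_n\}$, and only for $\mu\geq2$) can be avoided entirely here.
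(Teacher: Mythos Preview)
Your argument is correct and genuinely simpler than the paper's. Both proofs apply Lemma~\ref{f1} to the coefficient sequence $C_n$, but the paper verifies decrease and convexity separately: decrease via $\tilde g'(x)\le0$, and convexity by checking the first few differences $\tilde h(1)\ge\tilde h(2)\ge\tilde h(3)$ by hand and then proving $\tilde g''(x)\ge0$ for $x\ge3$ through digamma/trigamma bounds (inequalities \eqref{eq:psi lower}, \eqref{eq:trigamma}) combined with several auxiliary polynomial estimates. Your single ratio inequality $C_{n+1}\le\tfrac12 C_n$ replaces all of this. The computation behind it is clean: with $m=(n!)^2$ and $t=(n^2+2n)m/(m+\mu)$, the bound $\log(1+t)\ge t/(1+t)$ reduces $q'(\mu)\ge0$ to $(n^2+2n)\,m\,(m-1)\ge0$, which holds because $m=(n!)^2\ge1$; together with $q(0^{+})=\log\frac{n+1}{2}\ge0$ this gives the claim for all $\mu>0$. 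What your approach buys is the complete elimination of special-function estimates and the case splitting at small~$n$; what the paper's approach buys is a template (show $g''\ge0$ eventually, check finitely many cases by hand) that also serves the harder sequence $\{nC_n\}$ in Theorem~\ref{thm:Q star}, where no such simple ratio bound is available.
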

\begin{proof}
We use Lemma~\ref{f1}.
To show that
	\begin{equation*}
		\left\{ {{C_n}} \right\}_{n = 1}^\infty  = \left\{ \frac{n!(r^2+1)^{\mu+1}}{((n!)^2+r^2)^{\mu+1}} \right\}_{n = 1}^\infty
	\end{equation*}
	is a decreasing sequence, we define 
	\begin{equation*}
		\tilde{g}(x)=\frac{\Gamma(x+1)}{((\Gamma(x+1))^2+r^2)^{\mu+1}},
		\quad x\ge 1, \mu> 0.
	\end{equation*}
We have
\begin{equation*}
\tilde{g}'(x)= \frac{\Gamma'(x+1)\left[r^2-(2\mu+1)(\Gamma(x+1))^2\right]}{\left[(\Gamma(x+1))^2+r^2\right]^{\mu+2}}\le 0
\end{equation*}
when $x\ge 1$, $\mu>0$ and $0<r\le \sqrt{2\mu+1}$. Next we prove that $\{C_n\}$ is a convex sequence.
Define
\begin{equation*}
	\tilde{h}(x)=\tilde{g}(x)-\tilde{g}(x+1).
\end{equation*}
We first show that $\tilde{h}(1)\geq \tilde{h}(2)\geq \tilde{h}(3).$
We have
\begin{align*}
  \tilde{h}(1)-\tilde{h}(2) > \frac{1}{(r^2+1)^{\mu+1}} - \frac{4}{(r^2+4)^{\mu+1}} \geq0,
\end{align*}
because the map $x\mapsto x/(r^2+x)^{\mu+1}$ decreases for $x\geq r^2/\mu.$
For the same reason,
\begin{align*}
  \tilde{h}(2)-\tilde{h}(3)
  &> \frac12 \Big( \frac{4}{(r^2+4)^{\mu+1}} -  \frac{24}{(r^2+36)^{\mu+1}} \Big) \\
  &> \frac12 \Big( \frac{4}{(r^2+4)^{\mu+1}} -  \frac{36}{(r^2+36)^{\mu+1}} \Big) >0.
\end{align*}
We complete the proof by showing that the function $\tilde{g}(x)$ is
convex for $x\geq3,$ similarly as in the proof of Theorem~\ref{thm:Q star}. We have
\[
  \tilde{g}''(x) = \Gamma(x+1)(r^2 + \Gamma(x+1)^2)^{-\mu-3} \tilde{A}(x),
\]
where
\begin{multline*}
  \tilde{A}(x):=\Big(r^4-2r^2(4\mu+3)\Gamma(x+1)^2 + (2\mu+1)^2 \Gamma(x+1)^4 \Big)
  \psi(x+1)^2\\
  +\big(r^2+\Gamma(x+1)^2\big)\big(r^2 -(2\mu+1)\Gamma(x+1)^2\big) \psi'(x+1).
\end{multline*}
We state the following inequalities:
\begin{equation}\label{eq:r mu ineq}
-2r^2(4\mu+3)+(2\mu+1)^2 c\geq 0,\quad \mu>0,0\leq r\leq \sqrt{\mu}, c\geq2.
\end{equation}
\begin{equation}\label{eq:log ineq}
  \Big(\log(x+1)-\frac{1}{x+1}\Big)^2\geq1,\quad x\geq3.
\end{equation}
\begin{equation}\label{eq:frac ineq}
  \frac{1}{x+1}+\frac{1}{(x+1)^2}\leq \frac12, \quad x\geq3.
\end{equation}
\begin{equation}\label{eq:c mu ineq}
  (2\mu+1)^2-\frac{2r^2(4\mu+3)}{c}
  -\frac{2\mu+1}{2}\Big(1+\frac{r^2}{c}\Big)>0,\quad
  \mu>0,0\leq r\leq \sqrt{\mu}, c\geq5.
\end{equation}
It is very easy to show~\eqref{eq:log ineq}. The estimate~\eqref{eq:frac ineq} is obvious,
and~\eqref{eq:r mu ineq} and~\eqref{eq:c mu ineq} can be proven by computer algebra (see above).
{}From now on we assume $x\geq3.$
Since $\psi^2(x+1)$ and $\psi'(x+1)$ are non-negative, we have
\begin{multline*}
  \tilde{A}(x)\geq \big[{-2r^2(4\mu+3)\Gamma(x+1)^2} + (2\mu+1)^2 \Gamma(x+1)^4 \big]
  \psi(x+1)^2\\
  -\big(r^2+\Gamma(x+1)^2\big)(2\mu+1)\Gamma(x+1)^2 \psi'(x+1).
\end{multline*}
By~\eqref{eq:r mu ineq}, the term $[\dots]$ is $\geq0$. We then use~\eqref{eq:psi lower}
and~\eqref{eq:trigamma} and obtain
\begin{multline*}
  \tilde{A}(x)\geq \big[{-2r^2(4\mu+3)\Gamma(x+1)^2} + (2\mu+1)^2 \Gamma(x+1)^4 \big]
  \Big(\log (x+1)-\frac{1}{x+1}\Big)^2\\
  -\big(r^2+\Gamma(x+1)^2\big)(2\mu+1)\Gamma(x+1)^2 \Big(\frac{1}{x+1}+\frac{1}{(x+1)^2} \Big).
\end{multline*}
By~\eqref{eq:log ineq} and~\eqref{eq:frac ineq}, we further get
\begin{align*}
  \tilde{A}(x)&\geq \big[-2r^2(4\mu+3)\Gamma(x+1)^2 + (2\mu+1)^2 \Gamma(x+1)^4 \big]\\
  &\qquad \qquad -\frac12 \big(r^2+\Gamma(x+1)^2\big)(2\mu+1)\Gamma(x+1)^2 \\
  &=\Gamma(x+1)^4\Big(\Big[(2\mu+1)^2-\frac{2r^2(4\mu+3)}{\Gamma(x+1)^2}\Big]
  - \frac{2\mu+1}{2}\Big[1+\frac{r^2}{\Gamma(x+1)^2}\Big]\Big).
\end{align*}
This is positive by~\eqref{eq:c mu ineq}, for $x\geq3.$
\end{proof}

\begin{theorem}
For $\mu\geq2 $ and $0<r\le \sqrt{\mu}$,
\begin{equation*}
   \Re\big( {\mathbb{Q}'_{\mu}(r;z)}\big)
   >\frac{1}{2},\quad z\in \mathbb{U}.
\end{equation*}
\end{theorem}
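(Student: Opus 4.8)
The plan is to apply Lemma~\ref{f1} to the coefficient sequence of $\mathbb{Q}'_\mu(r;z)$, in exact parallel to how Theorem~\ref{t5} handled $\mathbb{F}'_\mu$. Differentiating~\eqref{eq:def Q} term by term gives
\[
  \mathbb{Q}'_\mu(r;z)=\sum_{n=1}^\infty n C_n z^{n-1},\qquad
  C_n=\frac{n!(r^2+1)^{\mu+1}}{((n!)^2+r^2)^{\mu+1}},
\]
so the relevant sequence is $\{nC_n\}_{n\geq1}$, whose terms are non-negative and whose first term is $1\cdot C_1=1$. It therefore suffices to check that $\{nC_n\}_{n\geq1}$ is non-increasing and convex.

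Both properties are already available from the preceding theorems. First I would cite Theorem~\ref{t21}, whose proof shows (via the Bernoulli inequality) that $\{nC_n\}$ is non-increasing for $\mu>0$ and $0<r\le\sqrt{\mu}$. Next I would cite Theorem~\ref{thm:Q star}, whose proof establishes that $h(n)=nC_n-(n+1)C_{n+1}$ is non-increasing under the stronger hypothesis $\mu\geq2$, $0<r\le\sqrt{\mu}$. The inequality $h(n)\geq h(n+1)$ reads $nC_n-2(n+1)C_{n+1}+(n+2)C_{n+2}\geq0$, which is precisely the convexity of $\{nC_n\}$.

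With $\{nC_n\}_{n\geq1}$ confirmed to be a non-negative, non-increasing, convex sequence with first term $1$, Lemma~\ref{f1} yields $\Re\big(\sum_{n\geq1} nC_n z^{n-1}\big)>\tfrac12$, that is $\Re(\mathbb{Q}'_\mu(r;z))>\tfrac12$ on $\mathbb U$, as claimed. I do not expect any genuine obstacle in this argument: the entire analytic burden---the digamma and trigamma estimates~\eqref{eq:psi upper}--\eqref{eq:trigamma} together with the CAD-verified polynomial inequalities---was already discharged in Theorem~\ref{thm:Q star}, which is also why the restriction $\mu\geq2$ (rather than merely $\mu>0$) is inherited here. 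The proof is thus a short corollary, structurally identical to that of Theorem~\ref{t5}.
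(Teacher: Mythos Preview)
Your proposal is correct and follows the paper's own proof essentially verbatim: differentiate~\eqref{eq:def Q}, cite Theorems~\ref{t21} and~\ref{thm:Q star} for the non-increasing and convex properties of $\{nC_n\}$, and apply Lemma~\ref{f1}. The structural parallel with Theorem~\ref{t5} that you point out is exactly how the paper proceeds.
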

\begin{proof}
From \eqref{eq:def Q},
\begin{equation*}
   \mathbb{Q}'_{\mu}(r;z)= 1+ \sum\limits_{n=2}^{\infty}{\frac{n\, n!(r^2+1)^{\mu+1}}{((n!)^2+r^2)^{\mu+1}}z^{n-1}}.
\end{equation*}
We have shown in Theorems~\ref{t21} and~\ref{thm:Q star} that the sequence
\begin{equation*}
   \frac{n\, n!(r^2+1)^{\mu+1}}{((n!)^2+r^2)^{\mu+1}}
\end{equation*}
is convex and non-increasing, and so the result follows from Lemma~\ref{f1}.
\end{proof}

\section{Two further examples}

We conclude the paper with two more examples of starlike Mathieu-type power series.
Here, we apply the criterion that $z+\sum_{n=2}^\infty a_n z^n$ is starlike
if $\sum_{n=2}^\infty n |a_n|<1.$ See Goodman~\cite{goodman57}
and the references given there.

\begin{example}
  Define
  \[
     \hat{S}(z) = z+\sum_{n=2}^\infty \frac{8n}{(n^2+1)^3}z^n, \quad |z|<1.
  \]
  Using an inequality due to Diananda~\cite{diananda} and the above criterion,
  it can be shown that
  \[
     \int_0^z \frac{\hat{S}(t)}{t} dt = z
     +4 \sum_{n=2}^\infty \frac{2z^n}{(n^2+1)^3}
  \]
  is starlike. Indeed, by Theorem~1 in~\cite{diananda},
  \[
    4 \sum_{n=2}^\infty \frac{2n}{(n^2+1)^3}
    =4 \Big( \sum_{n=1}^\infty \frac{2n}{(n^2+1)^3}-\frac14\Big)
    <4\Big(\frac12-\frac14\Big)=1.
  \]
\end{example}

\begin{example}
   The power series
   \[
     z + 4 \sum_{n=2}^\infty \frac{(2n-1)!!}{[(2n+1)!!+1]^2}z^n, \quad |z|<1,
   \]
   is starlike. Recall that the double factorial is defined by
   $(2n+1)!!=1\times 3\times \dots \times (2n+1).$
   Starlikeness follows from the estimate
   \begin{align*}
     \sum_{n=1}^\infty \frac{4n(2n-1)!!}{[(2n+1)!!+1]^2}
     &<2\sum_{n=1}^\infty\frac{(2n+1)(2n-1)!!-(2n-1)!!}{[(2n+1)!!+1][(2n-1)!!+1]}\\
     &= 2 \sum_{n=1}^\infty \Big[ \frac{1}{(2n-1)!!+1}
       -\frac{1}{(2n+1)!!+1} \Big] \\
&= 2\cdot \frac{1}{(2\cdot 1-1)!!+1} = 1,
   \end{align*}
   i.e.
   \[
      \sum_{n=2}^\infty \frac{4n(2n-1)!!}{[(2n+1)!!+1]^2}
      <1-\frac14<1.
   \]
\end{example}



\begin{thebibliography}{999}
\bibitem{alzer} H.  Alzer,  J.  L.  Brenner  and  O.  G.  Ruehr,  On  Mathieu's  inequality.{\it J.  Math.  Anal.  Appl.} 218(1998), 607--610.
\bibitem{bala} R. Balasubramanian, S. Ponnusamy and M. Vuorinen, On hypergeometric functions and function spaces. {\it J. Comput. Appl. Math.} 139(2)(2002), 299--322.
\bibitem{bansaljmi} D. Bansal and J. Sokol, Geometric properties of Mathieu-type power series inside unit disk. {\it J. Math. Inequal.} 13(4)(2019), 911--918.
\bibitem{cerone} P. Cerone and C.T. Lenard, On integral forms of generalized  Mathieu
series. {\it J.  Inequal.  Pure  Appl.  Math.} 4,  No.  5 (2003),
Article 100, 1--11 (electronic).
\bibitem{choi} J.  Choi  and  H.M.  Srivastava,  Mathieu  series  and  associated  sums  involving the zeta functions.
{\it Comput. Math. Appl.} 59, 2(2010), 861--867.
\bibitem{diananda} P. H. Diananda, Some inequalities related to an inequality of Mathieu.
{\it Math. Ann.} 250(1980), 95--98.
\bibitem{duren}  P. L. Duren,  Univalent Functions, Springer-Verlag, 1983.
\bibitem{goodman} A. W. Goodman, Univalent functions, Vols. I and II. Mariner Publishing Co. Tampa, Florida, 1983.
\bibitem{goodman57}  A. W. Goodman, Univalent functions and nonanalytic curves.  {\it Proc. Amer. Math. Soc.} 8 (1957), 598--601. 
\bibitem{elezovic} N. Elezovi\'{c}, H. M. Srivastava and $\check{Z}$. Tomovski, Integral representations
and integral transforms of some families of Mathieu type series.
{\it Integral Transforms  Spec. Funct.} 19(7-8)(2008), 481--495.
\bibitem{emersleben} O. Emersleben,  \"{U}ber die Reihe $\sum\limits_{k=1}^{\infty}{{k}{(k^2+c^2)^{-2}}}$. {\it Math. Ann.} 125(1952), 165--171.
\bibitem{fejer1925} L. Fej{\'e}r, {\"U}ber die {P}ositivit{\"a}t von {S}ummen, die nach trigonometrischen oder Legendreschen {F}unktionen fortschreiten, Acta Szeged 2 (1925), 75--86.
\bibitem{fejer} L. Fej\'{e}r, Untersuchungen \"{u}ber Potenzreihen mit mehrfach monotoner Koeffizientenfolge, {\it Acta Literarum Sci.} 8(1936), 89--115.
\bibitem{gerhold} S. Gerhold, F. Hubalek and \v{Z}. Tomovski, Asymptotics of some generalized Mathieu series, {\it Math. Scand.} 126(2020), 424-450.
\bibitem{mathieu} E. L.  Mathieu,   Trait\'{e}  de  Physique  Mathematique.  VI-VII:
{\it  Theory  de l'{\'e}lasticit\'{e} des corps solides (Part 2)}.
Gauthier-Villars, Paris (1890).
\bibitem{makai} E. Makai,  On  the  inequality  of  Mathieu, {\it Publ. Math. Debrecen} 5(1957), 204--205.
\bibitem{merkle} M. Merkle, Gurland's ratio for the gamma function, {\it Comput. Math. Appl.} 49(2--3)(2005), 389--406.
\bibitem{mitrinovic} D.S. Mitrinovi{\'c}, Analytic inequalities, Springer, 1970.
\bibitem{ozaki} S. Ozaki, On the theory of multivalent functions. {\it Sci. Rep. Tokyo Bunrika Daigaku } A2(1935), 167--188.
\bibitem{pogany} T.K. Pog\'{a}ny, H.M. Srivastava, {\v Z}. Tomovski, Some families of Mathieu
$\mathbf{a}$-series  and  alternating  Mathieu  $\mathbf{a}$-series.   {\it   Appl.  Math.Comput.} 173(2006), 69--108.
\bibitem{sangal} P. Sangal and a. Swaminathan, Starlikeness of {G}aussian hypergeometric functions using
              positivity techniques, {\it Bull. Malays. Math. Sci. Soc.} 41(1)(2018), 507--521.
\bibitem{sriva} H. M.  Srivastava, \v{Z}. Tomovski and D. Le\v{s}kovski, Some families of {M}athieu type series and {H}urwitz-{L}erch       zeta functions and associated probability distributions.
{\it Appl. Comput. Math.} 14(3)(2015), 349--380.
\bibitem{tomovski} {\v Z}. Tomovski, New integral and series representations of the generalized
Mathieu series. {\it Appl. Anal. Discrete Math.} 2(2)(2008),
205--212.
\bibitem{ourbook} {\v Z}. Tomovski, D. Le\v{s}kovski and S. Gerhold,
Generalized Mathieu Series, Springer, 2021.
\bibitem{tomovski1} {\v Z}. Tomovski and T.K. Pog\'{a}ny, Integral expressions for Mathieu-type power series and for the Butzer-Flocke Hauss $\Omega$-function.  {\it Fract. Calc. Appl. Anal. } 14(4)(2011),
623--634.
\end{thebibliography}
\end{document}